\documentclass[]{fairmeta}

\usepackage{amsmath, amssymb, amsthm}
\usepackage{algorithm}
\usepackage{algpseudocode}
\usepackage{graphicx}

\newtheorem{proposition}{Proposition}

\newtheorem{remark}{Remark}

\DeclareMathOperator*{\argmin}{argmin}
\DeclareMathOperator*{\argmax}{argmax}
\DeclareMathOperator{\prox}{prox}
\DeclareMathOperator{\proj}{\Pi}

\title{Solving Minimax Problems with Bilinear Objectives with ADMM}

\author[1,*]{Bob Wilson}

\affiliation[1]{Marketing Data Science at Meta}

\contribution[*]{Work done at Meta}

\date{\today}
\correspondence{Bob Wilson at \email{rwilson4@meta.com}}

\abstract{We consider minimax (saddle-point) problems of the form
  $\max_{c \in \mathcal{C}} \min_{\beta \in S} g(c; \beta)$, where
  $\mathcal{C}$ and $S$ are compact convex sets, and $g$ is
  concave-convex. Applying the Alternating Direction Method of
  Multipliers (ADMM) requires evaluating a proximal operator that is,
  in general, as hard as the original problem. We show that when the
  outcome function $g$ is bilinear, i.e. $g(c; \beta) = c^T A \beta,$
  the proximal operator reduces to a \emph{generalized projection}
  onto the confidence region $S$. This reduction is exact --- it
  involves no approximation or linearization. The resulting ADMM
  algorithm alternates between (i)~a generalized projection onto $S$
  and (ii)~a Euclidean projection onto $\mathcal{C}.$ We describe the
  derivation, state the algorithm, and discuss convergence.}

\begin{document}
\maketitle

\section{Introduction}
Data science analyses are often conducted to inform some decision. The
decision seeks to achieve some outcome (or perhaps maximize some
objective) by choosing from some set of options we'll call the
\emph{decision space}. The decision space may be some finite set of
options, such as the color of a button or the decision to ship or not
ship some feature. Or the decision space may be a continuum, such as
the choice of how to allocate marketing resources across multiple
marketing channels. This paper focuses on the latter type, where the
decision space is some compact set,
$ \mathcal{C} \subseteq \mathbb{R}^n. $

If the decision maker could anticipate the consequences of their
decision on the outcome, say in the form of a deterministic function
mapping decisions to outcomes, decision making would be an exercise in
mathematical optimization. Often this connection involves unknowns,
and the data scientist contributes by estimating relevant quantities.

Let $ g(c; \beta) : \mathcal{C} \times \mathcal{B} \to \mathbb{R} $
represent the outcome associated with the decision $ c. $ Without loss
of generality, we assume larger values correspond to better outcomes.
The outcome is parametrized by one or more quantities, $ \beta, $
which are known a priori to belong to some support $ \mathcal{B}
\subseteq \mathbb{R}^m. $ For example, $ \beta $ might represent
conversion rates associated with $ m $ options, and thus we know $
\beta \in [0, 1]^m. $

The analysis delivers a point estimate, $ \hat{\beta} \in \mathcal{B}$
-- a best guess as to the value of $ \beta. $ Armed with this estimate
we might calculate
\begin{equation}\label{eqn:naive_optimal}
  c_0 := \argmax_{c \in \mathcal{C}} {g(c;  \hat{\beta})}
\end{equation}
as our recommended decision. Yet this strategy neglects the
uncertainty associated with $ \hat{\beta}. $ No matter how well the
analysis was performed, some uncertainty is inescapable. This
uncertainty may be quantified with a \emph{confidence set}, $ S
\subseteq \mathcal{B}. $ While it is implausible that $ \beta =
\hat{\beta} $ exactly, it is comparatively defensible that $ \beta \in
S. $ Thus it is $ S, $ more so than $ \hat{\beta}, $ that represents
what the analysis has taught us about $ \beta. $

If the outcome associated with a decision, $ c, $ varies dramatically
across the confidence set, we cannot be confident in the consequences
of our decision. Uncertainty in $ \beta $ leads to uncertainty in the
anticipated outcome, and poses a kind of risk to the decision. This
risk we quantify as the infimum of $ g $ over $ \beta \in S: $
\begin{equation}
  f(c) := \inf_{\beta \in S} g(c; \beta).
\end{equation}
While $ g(\cdot; \hat{\beta}) $ calculates the expected outcome,
$ f(\cdot) $ calculates the worst-case outcome (conditional on
$ \beta \in S$).

The decision $ c_0, $ made on the basis of the point estimate $
\hat{\beta}, $ may involve considerable risk if $f(c_0)$ is materially
lower than $ g(c_0; \hat{\beta}). $ For this reason, it is na\"ive to
attribute any notion of optimality to $ c_0; $ henceforth, we call it
the \emph{na\"ive optimal} decision.

Another decision may involve materially less risk (having a better
worst-case outcome), with perhaps only a slightly worse expected
outcome. To assess this possibility, we solve:
\begin{equation}\label{eqn:dual}
  \begin{array}{ll}
    \mbox{maximize}_c & f(c) \\
    \mbox{subject to} & c \in \mathcal{C},
  \end{array}
\end{equation}
which is equivalent to the maximin problem:
\begin{displaymath}
  \begin{array}{ll}
    \mbox{maximize}_c \; \mbox{minimize}_\beta & g(c; \beta) \\
    \mbox{subject to} & \beta \in S \\
                      & c \in \mathcal{C}.
  \end{array}
\end{displaymath}
Call the solution to~(\ref{eqn:dual}) the \emph{robust optimal}
decision.

\section{Allocating Marketing Resources Across Channels}

As an example, consider allocating marketing resources across
$ n $ digital channels. Given a maximum allocation $ B, $ the decision
space is $ \mathcal{C} = \{ c : c \succeq 0, \sum_i c_i \leq B \}, $
where $ c \succeq 0 $ enforces nonnegative allocation to each channel.

Many digital channels permit randomized \emph{lift studies}, where an
audience is divided into \emph{Marketing} and \emph{Holdout} groups;
people in the Holdout group do not see ads. The \emph{uplift} for a
channel is the difference in conversion rates, Marketing minus
Holdout:
\begin{displaymath}
  \xi_i = \beta_i^\mathrm{M} - \beta_i^{\mathrm{H}}.
\end{displaymath}
Channels may differ in how resource-intensive it is to reach people. If we
allocate $ c_i $ to channel $ i $, and it requires $ \rho_i $ to reach one
person, a simple model leads us to expect
\begin{align}
  g(c; \beta) &:= c_i \xi_i / \rho_i \nonumber \\
  &= c_i \left( \beta_i^\mathrm{M} - \beta_i^{ \mathrm{H}} \right) / \rho_i \nonumber \\
  &= c^T A \beta \label{eqn:outcome_model}
\end{align}
incremental outcomes, where
\begin{displaymath}
  A = \begin{pmatrix}
    -1/\rho_1 & 1/\rho_1 & 0 & 0 & \cdots \\
    0 & 0 & -1/\rho_2 & 1/\rho_2 & \cdots \\
    \vdots & & & & \ddots
  \end{pmatrix},
\end{displaymath}
and $ \beta = [\beta_1^\mathrm{H}, \beta_1^\mathrm{M}, \dots] \in [0, 1]^{2n} =: \mathcal{B}. $

This simple model neglects the \emph{diminishing returns} property of
marketing: often as we increase allocation to a channel, we see reduced
effectiveness. Yet we may view Equation~(\ref{eqn:outcome_model}) as
an approximation to the true relationship, by viewing it as a Taylor
expansion around the baseline allocation profile used in the test. We may
view the model as reliable only for decisions in a neighborhood or
\emph{trust region} of this baseline. We can incorporate this
restriction into the definition of the decision space,
$ \mathcal{C}; $ we don't consider it further here.

Randomized lift studies deliver point estimates of uplift in each
channel. The na\"ive optimal decision allocates the entire allocation to
whichever channel has the best $ \xi / \rho $ ratio, or perhaps
maximizing contribution to this channel over the trust region where
the linear relationship is reliable. Yet this strategy involves risk
unless we estimate $\xi$ precisely, which is rare in practice.

A confidence region, $ S, $ quantifies the uncertainty associated with
finite sample sizes. With large sample sizes and conversion rates not
too close to 0 or 1, we can approximate $ S $ as ellipsoidal, $
S_\mathrm{ellipsoid} = \{(\beta - \hat{\beta})^T P (\beta -
\hat{\beta}) \leq 1\}. $ It is often better to use a region based on
the likelihood ratio test applied to the binomial log-likelihood:
\begin{displaymath}
  S_\mathrm{binomial} = \{ \beta \in [0,1]^{2n} : 2 \cdot (\ell(\hat{\beta}) - \ell(\beta)) \leq  \chi^2_{1-\alpha,2n} \},
\end{displaymath}
where $\ell(\beta) = \sum_i \bigl[ s_i \log \beta_i + (t_i - s_i)
\log(1 - \beta_i) \bigr]$ is the binomial log-likelihood, $s_i$ and
$t_i$ are the observed successes and trials, and $\hat{\beta}_i := s_i
/ t_i$ is the maximum likelihood estimate. When channels have
conversion rates close to 0 (as is often the case), $
S_\mathrm{ellipsoid} $ may include points outside of the support $
\mathcal{B}; $ $ S_\mathrm{binomial} $ is always $ \subseteq [0,
1]^{2n}. $ The ellipsoidal approximation admits a closed-form
Markowitz solution (\S\ref{sec:markowitz}), but the exact
likelihood-ratio region requires an iterative solver such as the ADMM
method developed here.

Solving~(\ref{eqn:dual}) is not always tractable, but when
$ g(c; \beta) $ is concave in $ c $ for each $\beta,$ $ f(c) $ is
concave (being the infimum of a family of concave functions
\cite[\S3.2.3]{boyd2004convex}), and~(\ref{eqn:dual}) is tractable.
Moreover, when $ g(c; \beta) $ is convex in $ \beta $ for each $c$ and
$ S $ is a convex set, evaluating $f(c)$ involves solving a convex
optimization problem.

In this case, a saddle point condition characterizes the solution.
There exist $c^\star$ and $ \beta^\star$ such that $ g(c; \beta^\star)
\leq g(c^\star; \beta^\star) \leq g(c^\star; \beta)$ for all $ c \in
\mathcal{C} $ and all $ \beta \in S. $ The robust optimal decision
will be $ c^\star, $ having a better outcome than any other decision
when $ \beta = \beta^\star, $ and $ \beta^\star $ corresponds to the
worst value of the parameters, having a worse outcome than any other
in the confidence set.

Several approaches to finding such a saddle point are available.
Projected subgradient methods make minimal assumptions but converge
slowly. Accelerated proximal gradient methods are faster but require $
f(c) $ to be differentiable. This holds only when the corresponding
minimizer is unique. This may not apply if $ g $ is not strongly
convex in $ \beta. $ The Markowitz approach yields a closed-form
quadratic program when $g$ is bilinear and $S$ is ellipsoidal, but
does not extend to other confidence regions.

In this paper, we develop an ADMM-based solver that occupies a useful
middle ground. It converges reliably, makes no smoothness assumptions
on $ f, $ and applies to any convex confidence region $S.$ The key
insight is: when $g$ is a \emph{matrix game}, i.e., $g(c; \beta) = c^T
A \beta,$ we compute the proximal operator arising in ADMM exactly via
a \emph{generalized projection} onto $S.$

\section{ADMM for the Dual Problem}

\subsection{Reformulation}

We rewrite~\eqref{eqn:dual} as a minimization:
\begin{equation}\label{eq:min-form}
  \begin{array}{ll}
    \mbox{minimize} & \tilde{f}(c) + I_{\mathcal{C}}(c),
  \end{array}
\end{equation}
where $\tilde{f}(c) := -f(c) = \sup_{\beta \in S} \{-g(c; \beta)\}$
and $I_{\mathcal{C}}$ is the indicator function of $\mathcal{C}$:
$I_{\mathcal{C}}(c) = 0$ if $c \in \mathcal{C}$ and $+\infty$
otherwise.

To apply ADMM, we introduce an auxiliary variable $y$ and
reformulate~\eqref{eq:min-form} as the consensus problem
\begin{displaymath}
  \begin{array}{ll}
    \mbox{minimize} & \tilde{f}(y) + I_{\mathcal{C}}(c) \\
    \mbox{subject to} & y = c.
  \end{array}
\end{displaymath}
The augmented Lagrangian (in scaled form, following
\citealt{boyd2011distributed}, \S3.1.1) is
\begin{displaymath}
  L_\rho(y, c, u) = \tilde{f}(y) + I_{\mathcal{C}}(c)
  + \frac{\rho}{2} \|y - c + u\|_2^2,
\end{displaymath}
where $u$ is the scaled dual variable and $\rho > 0$ is the penalty
parameter.

\subsection{ADMM Updates}

The ADMM iteration consists of three steps:
\begin{align}
  y^{k+1} &= \prox_{\tilde{f}/\rho}\!\bigl(c^k - u^k\bigr),
  \label{eq:y-update} \\
  c^{k+1} &= \prox_{I_{\mathcal{C}}/\rho}\!\bigl(y^{k+1} + u^k\bigr)
           = \proj_{\mathcal{C}}\!\bigl(y^{k+1} + u^k\bigr),
  \nonumber \\
  u^{k+1} &= u^k + y^{k+1} - c^{k+1}. \nonumber
\end{align}

The $c$-update is simply Euclidean projection onto $\mathcal{C}$, a
convex optimization problem. The $u$-update is trivial. The challenge
lies in the $y$-update~\eqref{eq:y-update}, which requires evaluating
the proximal operator of $\tilde{f}$.

\section{The Proximal Reduction}

\subsection{The Proximal Operator of $\tilde{f}$}

Expanding the definition:
\begin{equation}\label{eq:prox-def}
  \prox_{\tilde{f}/\rho}(v) = \argmin_y \left\{
    \sup_{\beta \in S} \bigl\{-g(y; \beta)\bigr\}
    + \frac{\rho}{2} \|y - v\|_2^2
  \right\},
\end{equation}
where we write $v := c^k - u^k$ for brevity. This is a minimax
problem: minimize over $y$, maximize over $\beta \in S$. In general,
evaluating this proximal operator is as hard as the original problem.

When $g(y; \beta) = y^T A \beta$,
\eqref{eq:prox-def} becomes
\begin{displaymath}
  \begin{array}{ll}
    \mbox{minimize}_y \; \mbox{maximize}_\beta
    & -y^T A \beta + \frac{\rho}{2} \|y - v\|_2^2 \\
    \mbox{subject to} & \beta \in S.
  \end{array}
\end{displaymath}
The inner function is linear (hence concave) in $\beta$ and strongly
convex in $y$ (due to the quadratic penalty). By the minimax theorem
\citep{sion1958}, we may interchange the order of optimization:

\begin{displaymath}
  \min_y \max_{\beta \in S} \left\{
    -y^T A \beta + \frac{\rho}{2} \|y - v\|_2^2
  \right\}
  = \max_{\beta \in S} \min_y \left\{
    -y^T A \beta + \frac{\rho}{2} \|y - v\|_2^2
  \right\}.
\end{displaymath}

After the interchange, we first minimize over $y$. The first-order
condition is
\begin{displaymath}
  -A\beta + \rho(y - v) = 0
  \quad \Longrightarrow \quad
  y^\star(\beta) = v + \frac{1}{\rho} A\beta.
\end{displaymath}
Substituting $y^\star(\beta)$ back into the objective:
\begin{align}
  &-\bigl(v + \tfrac{1}{\rho}A\beta\bigr)^T A\beta
  + \frac{\rho}{2} \bigl\|\tfrac{1}{\rho}A\beta\bigr\|_2^2 \notag \\
  &\quad= -v^T A\beta - \frac{1}{\rho}\|A\beta\|_2^2
  + \frac{1}{2\rho}\|A\beta\|_2^2 \notag \\
  &\quad= -v^T A\beta - \frac{1}{2\rho}\|A\beta\|_2^2 \notag \\
  &\quad= -\frac{1}{2\rho}\bigl\|A\beta + \rho v\bigr\|_2^2
  + \frac{\rho}{2}\|v\|_2^2.
  \label{eq:reduced}
\end{align}
Since $\frac{\rho}{2}\|v\|_2^2$ is constant with respect to $\beta$,
maximizing~\eqref{eq:reduced} over $\beta \in S$ is equivalent to
\begin{displaymath}
  \min_{\beta \in S} \; \|A\beta + \rho v\|_2^2
  = \min_{\beta \in S} \; \|A\beta - (-\rho v)\|_2^2.
\end{displaymath}

When $g(c; \beta) = c^T A\beta$, the proximal operator
$\prox_{\tilde{f}/\rho}(v)$ can be computed as follows:
\begin{enumerate}
  \item Perform a \emph{generalized projection} of $ -\rho v $ onto S:
    \begin{displaymath}
      \beta^\star = \argmin_{\beta \in S} \|A\beta - (-\rho v)\|_2^2.
    \end{displaymath}
  \item Set $y^\star = v + \frac{1}{\rho}A\beta^\star$.
\end{enumerate}
Then $y^\star = \prox_{\tilde{f}/\rho}(v)$.

\begin{remark}
  A \emph{generalized projection} of a point $w$ onto a set $S$
  through a linear map $A$ is
  $\argmin_{\beta \in S} \|A\beta - w\|_2^2$. When $A = I$, this
  reduces to Euclidean projection. The generalized projection is a
  convex optimization problem (minimizing a convex quadratic over a
  convex set).
\end{remark}

\section{The Complete Algorithm}

Combining the proximal reduction with the standard ADMM updates, we
obtain Algorithm~\ref{alg:admm}. Algorithm~\ref{alg:admm} involves
alternating generalized projections onto $S$
(line~\ref{line:gen-proj}) followed by Euclidean projections onto
$\mathcal{C}$ (line~\ref{line:c-update}).

\begin{algorithm}[t]
\caption{ADMM for Minimax Matrix Games}
\label{alg:admm}
\begin{algorithmic}[1]
\Require Outcome matrix $A$, confidence region $S$, decision space
$\mathcal{C}$, penalty $\rho > 0$, tolerances $\epsilon_{\text{abs}}$,
$\epsilon_{\text{rel}}$.
\State Initialize $c^0 \in \mathcal{C}$, $\beta^0 \in S$,
  $u^0 = 0$.
\For{$k = 0, 1, 2, \ldots$}
  \State Set $v^k \leftarrow c^k - u^k$.
  \State \textbf{Generalized projection:}
    $\beta^{k+1} \leftarrow \displaystyle\argmin_{\beta \in S}
    \|A\beta + \rho v^k\|_2^2$.
    \label{line:gen-proj}
  \State \textbf{Auxiliary update:}
    $y^{k+1} \leftarrow v^k + \frac{1}{\rho} A\beta^{k+1}$.
    \label{line:y-update}
  \State \textbf{Decision projection:}
    $c^{k+1} \leftarrow \proj_{\mathcal{C}}(y^{k+1} + u^k)$.
    \label{line:c-update}
  \State \textbf{Dual update:}
    $u^{k+1} \leftarrow u^k + y^{k+1} - c^{k+1}$.
    \label{line:u-update}
  \State \textbf{Primal residual:}
    $r^{k+1} \leftarrow y^{k+1} - c^{k+1}$.
  \State \textbf{Dual residual:}
    $s^{k+1} \leftarrow \rho(c^{k+1} - c^k)$.
  \State Compute tolerances:
  \[
    \epsilon_{\text{pri}} = \sqrt{n}\,\epsilon_{\text{abs}}
    + \epsilon_{\text{rel}} \max\bigl(\|y^{k+1}\|_2,
    \|c^{k+1}\|_2\bigr),
  \]
  \[
    \epsilon_{\text{dual}} = \sqrt{n}\,\epsilon_{\text{abs}}
    + \epsilon_{\text{rel}}\,\rho\,\|u^{k+1}\|_2.
  \]
  \If{$\|r^{k+1}\|_2 \leq \epsilon_{\text{pri}}$ and
    $\|s^{k+1}\|_2 \leq \epsilon_{\text{dual}}$}
    \State \textbf{break}
  \EndIf
\EndFor
\State Recover worst-case parameters:
  $\beta^\star \leftarrow
  \argmin_{\beta \in S} g(c^k; \beta)$.
\State \Return $(c^k, \beta^\star)$.
\end{algorithmic}
\end{algorithm}

\section{Convergence}

The convergence of ADMM for convex problems is well established
\citep{boyd2011distributed, eckstein1992douglas}. Our formulation is a
standard two-block ADMM problem with convex objectives $\tilde{f}$ and
$I_{\mathcal{C}}$, so the classical convergence guarantees apply
directly.

\begin{proposition}[Convergence]
  Let $\tilde{f}$ and $I_{\mathcal{C}}$ be closed, proper, convex
  functions, and suppose
  the unaugmented Lagrangian $L_0$ has a saddle point. Then the
  iterates of Algorithm~\ref{alg:admm} satisfy:
  \begin{enumerate}
    \item \textbf{Primal feasibility:}
      $\|r^k\|_2 \to 0$ as $k \to \infty$.
    \item \textbf{Dual feasibility:}
      $\|s^k\|_2 \to 0$ as $k \to \infty$.
    \item \textbf{Objective convergence:}
      $\tilde{f}(y^k) + I_{\mathcal{C}}(c^k) \to p^\star$, the optimal
      value.
  \end{enumerate}
\end{proposition}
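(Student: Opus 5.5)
The plan is to reduce the statement to the classical two-block ADMM convergence theorem of \citet[\S3.2 and Appendix A]{boyd2011distributed}. That theorem covers problems of the form $\min f(x)+g(z)$ subject to $Ax+Bz=c$, with closed proper convex $f,g$ and an unaugmented Lagrangian that has a saddle point. Our consensus formulation is this problem with $f=\tilde f$, $g=I_{\mathcal{C}}$, $A=I$, $B=-I$ and right-hand side $0$. The scaled-form updates \eqref{eq:y-update} and the $c$- and $u$-updates are exactly the standard iteration. The one thing that is not literally the textbook iteration is that Algorithm~\ref{alg:admm} computes the $y$-update via the generalized projection, not by evaluating $\prox_{\tilde f/\rho}$ directly. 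So the first step is to check that lines~\ref{line:gen-proj}--\ref{line:y-update} produce $y^{k+1}=\prox_{\tilde f/\rho}(v^k)$ exactly.

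This check is the main obstacle, and I would handle it carefully rather than rely on the informal derivation. Put $\phi(y,\beta)=-y^TA\beta+\frac{\rho}{2}\|y-v\|_2^2$. It is continuous, strongly convex in $y$ and linear in $\beta$, and $S$ is compact and convex, so Sion's theorem gives $\min_y\max_{\beta\in S}\phi=\max_{\beta\in S}\min_y\phi$. The left side has a unique minimizer $\hat y$, by strong convexity of $y\mapsto\max_\beta\phi$. The right side is attained at any $\beta^\star$ solving the generalized projection, by the computation leading to \eqref{eq:reduced}. Since minimax equals maximin with both attained, $(y^\star(\beta^\star),\beta^\star)$ is a saddle point. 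Indeed $\phi(y^\star(\beta^\star),\beta^\star)=\min_y\phi(\cdot,\beta^\star)$ equals the common value, which is $\ge\max_\beta\phi(\hat y,\cdot)$, and hence $\ge\phi(\hat y,\beta^\star)$. Combined with $y^\star(\beta^\star)$ being the unique minimizer of $\phi(\cdot,\beta^\star)$, this gives $\max_\beta\phi(y^\star(\beta^\star),\beta)=\phi(y^\star(\beta^\star),\beta^\star)$, which equals the optimal value. By uniqueness, $y^\star(\beta^\star)=\hat y$.

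This argument holds even if the generalized projection has several minimizers, which happens whenever $A$ has a nontrivial null space. Any choice of $\beta^{k+1}$ therefore yields the same $y^{k+1}$.

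Next I would note that the hypotheses are natural here. $\tilde f$ is a pointwise supremum of linear functions over compact $S$, so it is finite, convex and closed. $I_{\mathcal{C}}$ is closed, proper and convex because $\mathcal{C}$ is nonempty, compact and convex. The saddle-point assumption is taken as given in the statement.

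With the iteration identified, I would invoke the cited theorem. Its proof builds the Lyapunov function $V^k=\frac1\rho\|w^k-w^\star\|_2^2+\rho\|c^k-c^\star\|_2^2$, where $w=\rho u$ is the unscaled dual variable, and shows $V^{k+1}\le V^k-\rho\|r^{k+1}\|_2^2-\rho\|c^{k+1}-c^k\|_2^2$. Summing gives $r^k\to0$ and $s^k=\rho(c^k-c^{k-1})\to0$, which are items 1 and 2. The two-sided bounds $p^{k+1}-p^\star\le -(w^{k+1})^Tr^{k+1}-\rho(c^{k+1}-c^k)^T(-r^{k+1}+c^{k+1}-c^\star)$ and $p^\star-p^{k+1}\le (w^\star)^Tr^{k+1}$ then give item 3, using that $w^k$ and $c^k$ are bounded (as $V^k$ is nonincreasing). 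Here $I_{\mathcal{C}}(c^k)=0$ for $k\ge1$ by construction, so the quantity converging is $\tilde f(y^k)$.
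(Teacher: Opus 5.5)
Your proposal is correct and takes the same route as the paper, whose proof simply cites the classical two-block ADMM convergence theorem (Eckstein--Bertsekas; Boyd et al., Appendix~A) for closed proper convex $\tilde f$ and $I_{\mathcal C}$. Your Sion/saddle-point check, showing that the generalized-projection and auxiliary-update lines return exactly $\prox_{\tilde f/\rho}(v^k)$ for any choice of minimizer $\beta^{k+1}$, makes explicit the step that justifies this citation; the paper leaves it to its earlier derivation. There are two small slips, neither affecting the limits. First, with $B=-I$ the last term of Boyd's bound becomes $-\rho(c^{k+1}-c^k)^T(r^{k+1}+c^{k+1}-c^\star)$, whereas you have $-r^{k+1}$ in place of $r^{k+1}$. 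Second, a nontrivial null space of $A$ permits multiple minimizers $\beta^{k+1}$ but does not force them.
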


\begin{proof}
  This is a direct application of \citet[Theorem~1]{eckstein1992douglas}
  (see also \citealt[Appendix~A]{boyd2011distributed}), since both
  $\tilde{f}$ and $I_{\mathcal{C}}$ are closed, proper, and convex.
\end{proof}

The convergence criteria used in Algorithm~\ref{alg:admm} follow
\citet[\S3.3.1]{boyd2011distributed}. The primal residual $r^{k+1} =
y^{k+1} - c^{k+1}$ measures consensus violation. The dual residual
$s^{k+1} = \rho(c^{k+1} - c^k)$ measures stationarity of the
Lagrangian. The tolerances combine absolute and relative components
scaled by the problem dimension.

\begin{remark}[Convergence rate]
  Standard ADMM has an $O(1/k)$ convergence rate for the objective
  value \citep{he2012convergence}. In practice, ADMM often converges
  faster than this worst case, particularly when we solve the
  proximal subproblems exactly, as we do in our setting.
\end{remark}

\section{Comparison with Other Approaches}

\subsection{Projected Subgradient Methods}

Projected subgradient methods make essentially no assumptions beyond
evaluability and projectability. However, they converge at rate
$O(1/\sqrt{k})$ and require a diminishing step size sequence, making
them slow in practice.

\subsection{Accelerated Proximal Gradient Methods}

Accelerated Proximal Gradient (APG) methods achieve $O(1/k^2)$
convergence when the objective is smooth. Applied to the dual problem,
APG requires the dual objective $f(c)$ to be differentiable. By
Danskin's theorem, $f$ is differentiable when the inner minimizer
$\beta^\star(c) = \argmin_{\beta \in S} g(c; \beta)$ is unique.
Uniqueness holds when $g$ is strictly convex in $\beta$, but for a
matrix game $g(c; \beta) = c^T A\beta$ is merely linear in $\beta$, so
uniqueness depends on the geometry of $S$. If $S$ has flat faces (as
likelihood-ratio confidence regions for binomial proportions can), the
dual objective may be nondifferentiable, causing the APG line search
to fail.

In practice, APG may fail to converge when the inner
minimization over $ \beta $ is solved only to moderate precision (as
is typical of interior point methods). In that case, the gradient
given by Danskin's theorem is only approximate, and this can prevent
APG from converging. However, ADMM may still
converge in these scenarios.

\subsection{Direct Quadratic Programming (Markowitz)}
\label{sec:markowitz}

When $S$ is an ellipsoid $\{(\beta - \hat{\beta})^T P (\beta -
\hat{\beta}) \leq 1\}$ and $\mathcal{C}$ is a simplex, the dual
objective has a closed form: $f(c) = c^T A\hat{\beta} - \|P^{-1/2} A^T
c\|_2$. The problem reduces to a second-order cone program, solvable
by standard methods. This is the fastest approach when it applies, but
it applies only to ellipsoidal confidence regions.

ADMM applies to \emph{any} convex confidence region, including
likelihood-ratio regions for binomial data.

\subsection{Summary}

\begin{center}
\begin{tabular}{llll}
  \hline
  \textbf{Method} & \textbf{Rate} & \textbf{Smoothness?}
  & \textbf{Conf.\ region} \\
  \hline
  Subgradient & $O(1/\sqrt{k})$ & No & Any \\
  APG & $O(1/k^2)$ & Yes & Any \\
  Markowitz & 1 iteration & N/A & Ellipsoidal \\
  \textbf{ADMM} & $O(1/k)$ & \textbf{No} & \textbf{Any}$^\dagger$ \\
  \hline
  \multicolumn{4}{l}{\small $^\dagger$Requires generalized projection
    onto $S.$}
\end{tabular}
\end{center}

\section{Discussion}

\subsection{The Role of the Matrix Game Assumption}

The proximal reduction depends critically on the bilinearity of $g$.
For a general outcome function, the proximal
operator~\eqref{eq:prox-def} involves a minimax problem that cannot be
decomposed as cleanly. The bilinear structure ensures that (i) the
minimax interchange is valid (the regularized objective is
convex--concave), (ii) the inner minimization over $y$ has a
closed-form solution, and (iii) the resulting outer maximization over
$\beta$ reduces to a standard convex problem.

\subsection{Choice of $\rho$}

The penalty parameter $\rho$ affects convergence speed but not the
converged solution. Large $\rho$ penalizes consensus violation and
reduces the primal residual, but may slow dual convergence; small
$\rho$ has the opposite effect. In practice, $\rho = 1$ works well for
problems where the decision and parameter spaces have comparable
scales. Adaptive schemes for varying $\rho$ across iterations appear
in \citet[\S3.4.1]{boyd2011distributed}.

\subsection{Final Recovery of $\beta^\star$}

The $\beta^{k+1}$ iterates produced by the generalized projection step
are not the worst-case parameters for $c^k$. They are intermediate
quantities in the proximal computation. After ADMM converges, we
recover the true worst-case parameters by evaluating the dual
objective at the final $c^k$: $\beta^\star = \argmin_{\beta \in S}
g(c^k; \beta)$.

\subsection{Warm-Starting}

We may wish to solve not just one problem of the
form~(\ref{eqn:dual}), but a family of problems:
\begin{equation}\label{eqn:pareto}
  \begin{array}{ll}
    \mbox{maximize}_c \; \mbox{minimize}_\beta & g(c; \beta) \\
    \mbox{subject to} & \beta \in S \\
                      & c \in \mathcal{C} \\
                      & g(c; \hat{\beta}) \geq \varphi,
  \end{array}
\end{equation}
varying $ \varphi $ to trade off between worst-case and expected
performance. Since the last constraint involves only $c$, we treat it
as part of the decision space and is thus covered under the notation
used throughout this paper. We interpret this as considering only
decisions that have a certain minimum expected outcome. Evaluating
this tradeoff curve involves solving several minimax problems.

Algorithm~\ref{alg:admm} accepts initial guesses $c^0$ and $\beta^0$.
When solving the family~(\ref{eqn:pareto}), we start from the largest
feasible value $ \varphi = h(\hat{\beta}), $ where
\begin{equation}
  h(\beta) = \sup_{c \in \mathcal{C}} g(c; \beta),
\end{equation}
giving $c^\star = c_0.$ This solution (and the corresponding $
\beta^\star $) remain feasible if we reduce $ \varphi, $ and thus
serve as $ c^0 $ and $\beta^0$ in the next problem. Warm-starting from
the earlier solution can substantially reduce the number of iterations
needed.

\section{Numerical Experiment}

We illustrate the convergence behavior of ADMM on a simulated
lift-study problem with $n = 5$ channels. The parameter vector $\beta
\in [0,1]^{10}$ has holdout and marketing conversion rates for each
channel, and the $5 \times 10$ outcome matrix $A$ has the structure
described in \S2.2. The confidence region $S$ is the acceptance region
of a likelihood-ratio test for binomial proportions, with each group
having between 200 and 500 trials. The decision space $\mathcal{C}$ is
a simplex. In both cases, custom interior point methods
handle the projections (generalized onto $S,$ Euclidean onto
$\mathcal{C}$).

Figure~\ref{fig:convergence} compares three solvers: ADMM
(Algorithm~\ref{alg:admm}), accelerated proximal gradient (APG) ascent
on the dual objective, and projected subgradient ascent. The left
panel shows the ADMM primal and dual residuals. The right panel plots
the duality gap $h(\beta^k) - f(c^k)$ for all three solvers on a
common vertical axis.

The residual trajectories exhibit a characteristic staircase pattern:
the dual residual decreases in discrete steps, and at each step the
primal residual spikes before recovering. This pattern reflects the
combinatorial structure of the simplex. The projection $c^{k+1}
= \Pi_{\mathcal{C}}(y^{k+1} + u^k)$ lands on a face of the polytope
determined by an \emph{active set:} the subset of channels receiving
zero allocation. While the active set remains stable, $c$ moves only
slightly within the face, so the dual residual $\rho\|c^{k+1} -
c^k\|_2$ is small and flat. Meanwhile, the scaled dual variable $u$
steadily accumulates the consensus violation $y - c.$ Eventually $u$
grows large enough that $y^{k+1} + u^k$ crosses a face boundary: a
channel gains or loses its allocation, the active set changes, and $c$
jumps to a new face. The dual residual spikes briefly, then settles to
a lower plateau because the new face is closer to the solution. The
primal residual $\|y^{k+1} - c^{k+1}\|_2$ spikes simultaneously
because $y^{k+1}$ was computed from the old $c^k - u^k$ and has not
yet adapted to the new face; it recovers on the next iteration. In
effect, ADMM is solving a combinatorial subproblem (identifying which
channels belong in the optimal support) through a sequence of
continuous relaxations, and the staircase reveals each revision of
that support.

APG converges fastest, reaching a duality gap below $10^{-5}$ within
50 iterations. ADMM converges to a gap below
$10^{-3}$ within 200 iterations, suggesting ADMM is most suitable for
problems where moderate precision is acceptable. Projected subgradient
ascent, despite its theoretical convergence guarantee, makes only slow
progress, with the gap hovering near $10^{-2}$ after 200 iterations,
consistent with its $O(1/\sqrt{k})$ rate.

\begin{figure}[t]
  \centering
  \includegraphics[width=\textwidth]{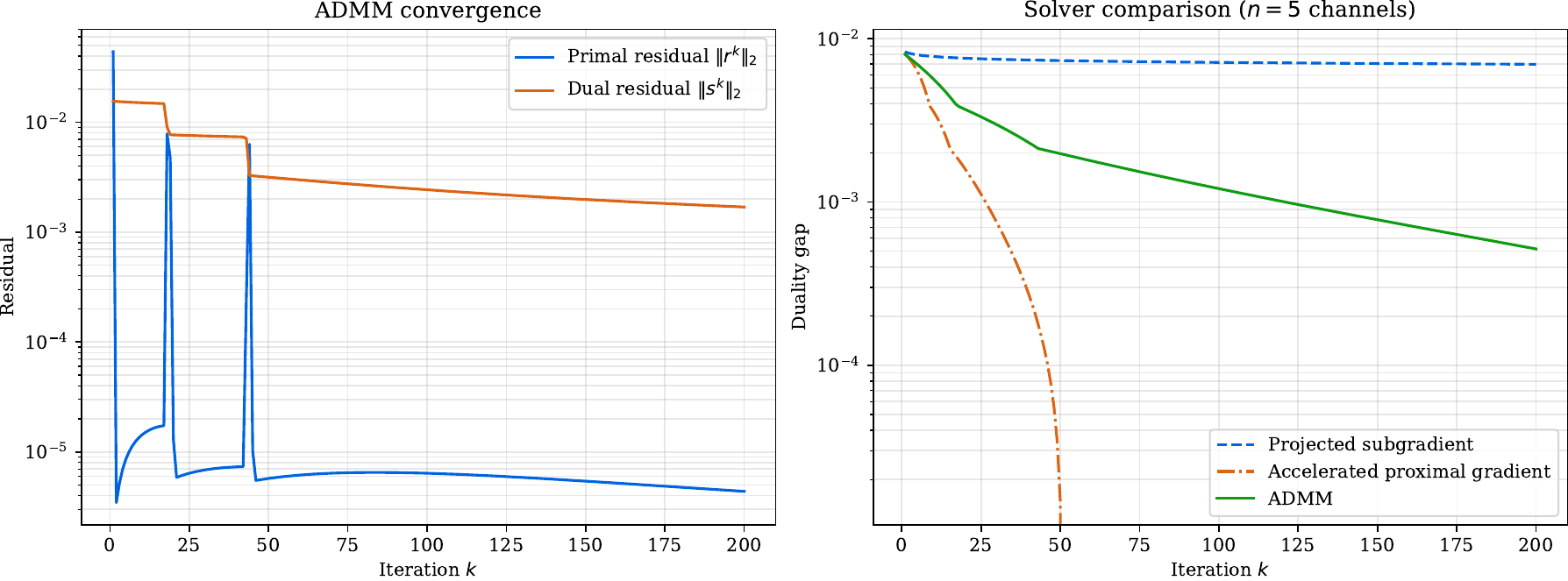}
  \caption{Convergence comparison on a simulated lift study with
    $n = 5$ channels and a binomial likelihood-ratio confidence
    region. \emph{Left:} ADMM primal and dual residuals (log
    scale). \emph{Right:} Duality gap $h(\beta) - f(c)$ for ADMM, APG, and
    projected subgradient ascent (log scale).}
  \label{fig:convergence}
\end{figure}

\section{Conclusion}

We have shown ADMM applies to minimax problems of the form $\max_{c
\in \mathcal{C}} \min_{\beta \in S} c^T A \beta$ by reducing the
proximal operator to a generalized projection onto the confidence
region. The resulting algorithm is conceptually simple,
computationally efficient, and numerically stable.

The approach occupies a useful niche: it is not as fast as the
Accelerated Proximal Gradient method, but its convergence is reliable
under broader conditions. ADMM
provides a reliable solver for the resource allocation problems
motivating this work.

\bibliographystyle{plainnat}
\bibliography{admm_minimax}

\begin{thebibliography}{5}
\providecommand{\natexlab}[1]{#1}
\providecommand{\url}[1]{\texttt{#1}}
\expandafter\ifx\csname urlstyle\endcsname\relax
  \providecommand{\doi}[1]{doi: #1}\else
  \providecommand{\doi}{doi: \begingroup \urlstyle{rm}\Url}\fi

\bibitem[Boyd and Vandenberghe(2004)]{boyd2004convex}
Stephen Boyd and Lieven Vandenberghe.
\newblock \emph{Convex Optimization}.
\newblock Cambridge University Press, 2004.

\bibitem[Boyd et~al.(2011)Boyd, Parikh, Chu, Peleato, and
  Eckstein]{boyd2011distributed}
Stephen Boyd, Neal Parikh, Eric Chu, Borja Peleato, and Jonathan Eckstein.
\newblock Distributed optimization and statistical learning via the alternating
  direction method of multipliers.
\newblock \emph{Foundations and Trends in Machine Learning}, 3\penalty0
  (1):\penalty0 1--122, 2011.

\bibitem[Eckstein and Bertsekas(1992)]{eckstein1992douglas}
Jonathan Eckstein and Dimitri~P. Bertsekas.
\newblock On the {D}ouglas--{R}achford splitting method and the proximal point
  algorithm for maximal monotone operators.
\newblock \emph{Mathematical Programming}, 55\penalty0 (1):\penalty0 293--318,
  1992.

\bibitem[He and Yuan(2012)]{he2012convergence}
Bingsheng He and Xiaoming Yuan.
\newblock On the $o(1/n)$ convergence rate of the {D}ouglas--{R}achford
  alternating direction method.
\newblock \emph{SIAM Journal on Numerical Analysis}, 50\penalty0 (2):\penalty0
  700--709, 2012.

\bibitem[Sion(1958)]{sion1958}
Maurice Sion.
\newblock On general minimax theorems.
\newblock \emph{Pacific Journal of Mathematics}, 8\penalty0 (1):\penalty0
  171--176, 1958.

\end{thebibliography}

\end{document}